\documentclass[11pt]{amsart}
\usepackage{graphicx,amscd,color}

\theoremstyle{plain}
\newtheorem{theorem}{Theorem}[section]

\newtheorem{lemma}[theorem]{Lemma}

\theoremstyle{remark}

\newtheorem{claim}{Claim}

\title [Bridge spheres for the unknot are topologically minimal]
{Bridge spheres for the unknot are topologically minimal}

\author[J. H. Lee]{Jung Hoon Lee}
\address{Department of Mathematics and Institute of Pure and Applied Mathematics,
Chonbuk National University, Jeonju 561-756, Korea}
\email{junghoon@jbnu.ac.kr}

\begin{document}

\begin{abstract}
We show that an $(n+1)$-bridge sphere for the unknot is a topologically minimal surface of index at most $n$.
\end{abstract}

\maketitle

\section{Introduction}\label{sec1}

Let $S$ be a closed orientable separating surface embedded in a $3$-manifold $M$.
The structure of the set of compressing disks for $S$,
such as how a pair of compressing disks in opposite sides of $S$ intersects,
reveals some topological properties of $M$.
For example, if $S$ is a minimal genus Heegaard surface of an irreducible manifold $M$ and
$S$ has a pair of disjoint compressing disks in opposite sides,
then $M$ contains an incompressible surface \cite{Casson-Gordon}.

A {\em disk complex} $\mathcal{D}(S)$ of $S$ is a simplicial complex defined as follows.

\begin{itemize}
\item Vertices of $\mathcal{D}(S)$ are isotopy classes of compressing disks for $S$.
\item A collection of $k+1$ vertices forms a $k$-simplex if there are pairwise disjoint representatives.
\end{itemize}

The disk complex of an incompressible surface is empty.
A surface $S$ is {\em strongly irreducible} if $S$ compresses to both sides and
every compressing disk for $S$ in one side intersects every compressing disk in the opposite side.
So the disk complex of a strongly irreducible surface is disconnected.
Extending these notions, Bachman defined topologically minimal surfaces \cite{Bachman},
which can be regarded as topological analogues of (geometrically) minimal surfaces.

A surface $S$ is {\em topologically minimal} if
$\mathcal{D}(S)$ is empty or $\pi_i(\mathcal{D}(S))$ is non-trivial for some $i$.
The {\em topological index} of $S$ is $0$ if $\mathcal{D}(S)$ is empty, and
the smallest $n$ such that $\pi_{n-1}(\mathcal{D}(S))$ is non-trivial, otherwise.

Topologically minimal surfaces share some useful properties.
For example, if an irreducible manifold contains a topologically minimal surface and an incompressible surface,
then the two surfaces can be isotoped so that any intersection loop is essential in both surfaces.
There exist topologically minimal surfaces of arbitrary high index \cite{Bachman-Johnson},
and see also \cite{Lee} for possibly high index surfaces in (closed orientable surface)$\times I$.
In this paper we consider bridge splittings of $3$-manifolds, and
show that the simplest bridge surfaces, bridge spheres for the unknot in $S^3$, are topologically minimal.
The main idea is to construct a retraction
from the disk complex of a bridge sphere to $S^{n-1}$ as in \cite{Bachman-Johnson} and \cite{Lee}.

\begin{theorem}\label{thm}
An $(n+1)$-bridge sphere for the unknot is a topologically minimal surface of index at most $n$.
\end{theorem}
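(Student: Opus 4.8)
The plan is to carry out the strategy announced above, following \cite{Bachman-Johnson} and \cite{Lee}: exhibit an $(n-1)$-sphere $\Sigma$ as a subcomplex of $\mathcal{D}(S)$ and build a simplicial retraction $r\colon\mathcal{D}(S)\to\Sigma$ (after barycentrically subdividing $\mathcal{D}(S)$). Then $r_*\circ\iota_*=\mathrm{id}$ for $\iota\colon\Sigma\hookrightarrow\mathcal{D}(S)$, so $\iota_*$ is injective on $\pi_{n-1}$ and $\pi_{n-1}(\mathcal{D}(S))$ is non-trivial (for $n=1$ this reads: $\mathcal{D}(S)$ is disconnected). Since for $n\geq 1$ the punctured bridge sphere $P=S\setminus N(K)$ admits a compressing disk in the exterior $E(K)$, $\mathcal{D}(S)$ is non-empty, and hence the topological index of $S$ is at most $n$. (For $n=0$, $P$ is an incompressible annulus in the solid torus $E(K)$, so $\mathcal{D}(S)=\emptyset$ and the index is $0$.) As is customary for bridge surfaces, ``compressing disk for $S$'' here means a compressing disk for $P$ in $E(K)$; if one works instead with the disk complex generated by compressing and cut disks, the argument is the same up to bookkeeping.

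First I would fix a standard model: put $K$ in $(n+1)$-bridge position meeting $S$ in points $p_1,\dots,p_{2n+2}$ (say as the alternating polygon, which presents $S$ as the $n$-fold bridge stabilization of the $1$-bridge sphere and displays $n$ ``stabilizing pairs'' of bridges), and choose disjoint bridge disks $\delta_1,\dots,\delta_{n+1}$ for the $n+1$ elementary arcs $\tau$ in each ball $V$ of the decomposition. Cutting the genus-$(n+1)$ handlebody $V\setminus N(\tau)$ along the corresponding disks turns it into a $3$-ball and turns $P$ into an $(n+1)$-holed sphere whose holes are indexed by the bridges in $V$. A standard innermost-disk/outermost-arc argument then shows that every compressing disk $D$ in $V\setminus N(\tau)$ can be isotoped off the cutting disks, so that $\partial D$ lies in this $(n+1)$-holed sphere; thus $D$ is, up to isotopy in $P$, the ``standard'' disk $D^{V}_{T}$ whose boundary encloses the endpoints of the bridges in some proper non-empty subset $T$, with $T$ well defined up to complementation. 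Moreover two standard disks are disjoint exactly when the puncture sets they enclose are nested or disjoint, a purely combinatorial condition one reads off from the model; in particular $\mathcal{D}(S)$ is determined by this combinatorics.

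Next I would select, among the $D^{V}_{T}$, a family $A_1,B_1,\dots,A_n,B_n$ (each $A_i$ in one ball, each $B_i$ in the other) with $A_i\cap B_i\neq\emptyset$ for every $i$ and all disks of distinct indices disjoint, so that $\{A_i,B_i\}_{i=1}^{n}$ spans an $(n-1)$-dimensional cross-polytope $\Sigma\cong S^{n-1}$ inside $\mathcal{D}(S)$. To define $r$ on a vertex $D^{V}_{T}$ I would record, for each index $i$, whether $D^{V}_{T}$ is disjoint from $A_i$, from $B_i$, or from neither (the choice of the $A_i,B_i$ must guarantee it is never disjoint from both), producing a sign vector in $\{+,0,-\}^{n}\setminus\{0\}$, i.e.\ a point of $S^{n-1}=\partial\Sigma$. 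Verifying that disjoint disks receive sign vectors spanning a common simplex makes $r$ simplicial after subdivision, and since $r$ fixes each $A_i$ and each $B_i$ it restricts to the identity on $\Sigma$; the theorem then follows as above.

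I expect the main obstacle to be exactly this last construction: choosing $\Sigma$ (equivalently, the pairs $A_i,B_i$) and the sign assignment so that $r$ really is a well-defined simplicial retraction — that no compressing disk is disjoint from both members of a pair, and that disjoint disks always land in a common simplex. This amounts to analyzing the nested-or-disjoint combinatorics of bridge-subsets on the two sides of $S$ simultaneously and matching it to the face structure of the cross-polytope, which is where the real work lies. By comparison, the normal-form step that classifies the compressing disks (isotoping $D$ off the cutting disks) is routine but must be done carefully, and checking that the chosen $\Sigma$ is genuinely an embedded $S^{n-1}$ on which $r|_\Sigma=\mathrm{id}$ is a secondary combinatorial point.
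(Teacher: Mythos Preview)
Your overall strategy is exactly the paper's: embed a cross-polytope $\Sigma\cong S^{n-1}$ in $\mathcal{D}(P)$ spanned by disks $D_i^{\pm}$ ($i=1,\dots,n$) with $D_i^{+}\cap D_j^{-}\ne\emptyset$ iff $i=j$, and build a retraction $\mathcal{D}(P)\to\Sigma$. Where your plan diverges from the paper is in the actual construction of the retraction, and here there is a genuine gap.

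Your sign assignment records, for each $i$, whether a disk $D$ is disjoint from $A_i$, from $B_i$, or from neither, and you require that $D$ is never disjoint from both. But this condition cannot be arranged: already the vertex $A_1$ itself is disjoint from both $A_i$ and $B_i$ for every $i\ge 2$---that is precisely the incidence pattern that makes $\{A_i,B_i\}$ span a cross-polytope. Even if you relax the rule so that ``disjoint from both'' contributes a $0$ at coordinate $i$ and only demand the full vector be nonzero, you must still check that two disjoint disks never receive opposite signs at the same coordinate, and nothing in the nested--or--disjoint combinatorics forces this.

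The paper sidesteps the symmetric disjointness test entirely. It chooses $D_i^{\pm}$ to be a specific nested family (with $\partial D_i^{+}=\partial N(b_1^{+}\cup b_1^{-}\cup\cdots\cup b_i^{+})$) and defines an asymmetric, hierarchical partition $\{C_i^{\pm}\}$ of the vertices of $\mathcal{D}(P)$: for odd $i$ one sets $C_i^{+}=\{D_i^{+}\}$ and lets $C_i^{-}$ be the disks in $B^{-}-K$ that intersect $D_i^{+}$ but miss $D_1^{+},D_3^{+},\dots,D_{i-2}^{+}$; for even $i$ the roles of $\pm$ are swapped. The substantive lemma is that this really is a partition, proved by outermost-arc arguments showing, for instance, that any essential disk in $B^{+}-K$ with boundary inside $\partial D_2^{-}$ is already isotopic to $D_1^{+}$; this is what stands in for your proposed normal-form classification. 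The retraction then collapses each $C_i^{\pm}$ to the vertex $D_i^{\pm}$; since by construction every disk in $C_i^{+}$ meets every disk in $C_i^{-}$, there is no edge of $\mathcal{D}(P)$ joining them, so the vertex map extends simplicially with no subdivision needed. Thus the obstacle you correctly identified is resolved by a different mechanism than the one you sketched.
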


In particular, the topological index of a $3$-bridge sphere for the unknot is two.
We conjecture that the topological index of an $(n+1)$-bridge sphere for the unknot is $n$.
There is another conjecture that the topological index of a genus $n$ Heegaard surface of $S^3$ is $2n - 1$.
This correspondence is maybe due to the fact that
a genus $n$ Heegaard splitting of $S^3$ can be obtained
as a $2$-fold covering of $S^3$ branched along an unknot in $(n+1)$-bridge position.

\section{Bridge splitting}\label{sec2}

For a closed $3$-manifold $M$, a {\em Heegaard splitting} $M = V^+ \cup_S V^-$
is a decomposition of $M$ into two handlebodies $V^+$ and $V^-$ with $\partial V^+ = \partial V^- = S$.
The surface $S$ is called a {\em Heegaard surface} of the Heegaard splitting.

Let $K$ be a knot in $M$ such that $V^{\pm} \cap K$ is
a collection of $n$ boundary-parallel arcs $\{ a_1^{\pm}, \ldots, a_n^{\pm} \}$ in $V^{\pm}$.
Each $a_i^{\pm}$ is called a {\em bridge}.
The decomposition $(M, K) = (V^+, V^+ \cap K) \cup_S (V^-, V^- \cap K)$ is called
a {\em bridge splitting} of $(M, K)$, and
we say that $K$ is in {\em $n$-bridge position} with respect to $S$.
A bridge $a_i^{\pm}$ cobounds a {\em bridge disk} $\Delta_i^{\pm}$ with an arc in $S$.
We can take the bridge disks $\Delta_i^+$ $(i=1, \ldots, n)$ to be mutually disjoint,
and also for $\Delta_i^-$ $(i=1, \ldots, n)$.
By a {\em bridge surface}, we mean $S - K$.
The set of vertices of $\mathcal{D}(S - K)$ consists of 
compressing disks for $S - K$ in $V^+ - K$ and $V^- - K$.

Two bridge surfaces $S - K$ and $S' - K$ are equivalent if they are isotopic in $M- K$.
An $n$-bridge position of the unknot in $S^3$ is unique for every $n$ \cite{Otal},
so for $n \ge 2$ it is {\em perturbed}, i.e.
there exists a pair of bridge disks $\Delta_i^+$ and $\Delta_j^-$ such that $|\Delta_i^+ \cap \Delta_j^-| =1$.
The uniqueness holds also for $2$-bridge knots \cite{Scharlemann-Tomova} and torus knots \cite{Ozawa}.
However, there are $3$-bridge knots that admit infinitely many $3$-bridge spheres \cite{Jang}.

\section{Proof of Theorem \ref{thm}}\label{sec3}

Let $S^3$ be decomposed into two $3$-balls $B^+$ and $B^-$ with common boundary $S$.
Let $K$ be an unknot in $S^3$ which is in $(n+1)$-bridge position with respect to $S$.
Then $K \cap B^{\pm}$ is a collection of $n+1$ bridges $a_i^{\pm}$ $(i=1, \ldots, n+1)$ in $B^{\pm}$.
We assume that the bridges are arranged so that $a_1^{\pm}$ is adjacent to $a_1^{\mp}$ and $a_2^{\mp}$, and
$a_i^{\pm}$ is adjacent to $a_{i-1}^{\mp}$ and $a_{i+1}^{\mp}$ for $2 \le i \le n$, and
$a_{n+1}^{\pm}$ is adjacent to $a_n^{\mp}$ and $a_{n+1}^{\mp}$.
Let $\{ \Delta_i^{\pm} \}$ be a collection of disjoint bridge disks $\Delta_i^{\pm}$ for $a_i^{\pm}$
with $\Delta_i^{\pm} \cap S = b_i^{\pm}$.
We assume that $\textrm{int}\, b_i^+ \cap \textrm{int}\, b_j^- = \emptyset$ for any $i$ and $j$.
See Figure $1$ for an example.

\begin{figure}[ht!]
\begin{center}
\includegraphics[width=12cm]{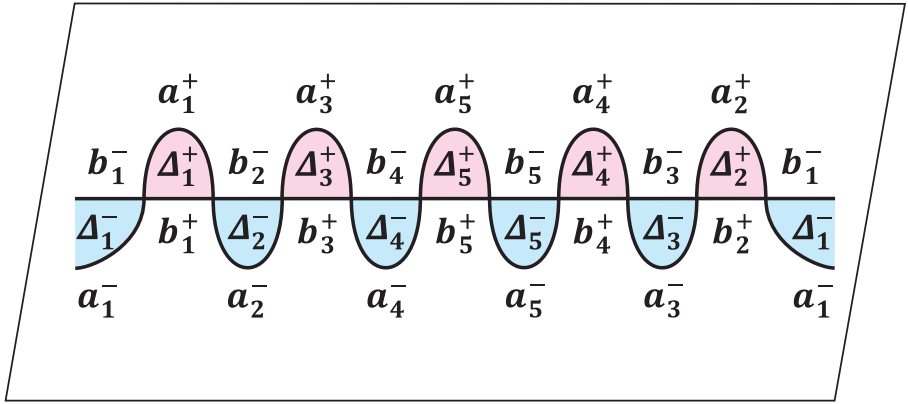}
\caption{Bridges and bridge disks.}\label{fig1}
\end{center}
\end{figure}

Let $P$ be the $(2n+2)$-punctured sphere $S - K$.
We define compressing disks $D_i^{\pm}$ $(i=1, \ldots, n)$ for $P$ in $B^{\pm} - K$ as follows.
Let $D_1^+$ be a disk in $B^+ - K$ such that $\partial D_1^+ = \partial N(b_1^+)$, where
$N(b_1^+)$ is a neighborhood of $b_1^+$ taken in $S$.
Similarly, other disks are defined so as to satisfy the following.
\begin{itemize}
\item $\partial D_1^- = \partial N(b_1^-)$
\item $\partial D_2^+ = \partial N(b_1^+ \cup b_1^- \cup b_2^+)$
\item $\partial D_2^- = \partial N(b_1^- \cup b_1^+ \cup b_2^-)$
\item[] $\vdots$
\item $\partial D_i^+ = \partial N(b_1^+ \cup b_1^- \cup \cdots \cup b_{i-1}^+ \cup b_{i-1}^- \cup b_i^+)$
\item $\partial D_i^- = \partial N(b_1^- \cup b_1^+ \cup \cdots \cup b_{i-1}^- \cup b_{i-1}^+ \cup b_i^-)$
\item[] $\vdots$
\item $\partial D_n^+ = \partial N(b_1^+ \cup b_1^- \cup \cdots \cup b_{n-1}^+ \cup b_{n-1}^- \cup b_n^+)$
\item $\partial D_n^- = \partial N(b_1^- \cup b_1^+ \cup \cdots \cup b_{n-1}^- \cup b_{n-1}^+ \cup b_n^-)$
\end{itemize}
In Figure $2$, $\partial D_i^{\pm}$'s in $P$ are depicted.

\begin{figure}[ht!]
\begin{center}
\includegraphics[width=13cm]{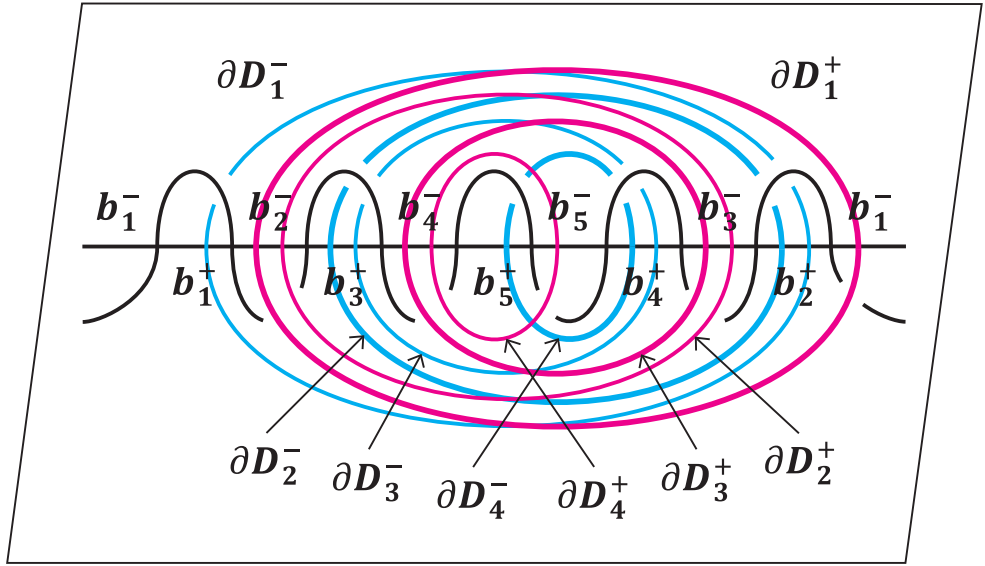}
\caption{$\partial D_i^{\pm}$ $(i=1, \ldots, n)$ in $P$.}\label{fig2}
\end{center}
\end{figure}

Now we define subsets $C_i^{\pm}$ $(i=1, \ldots, n)$ of the set of vertices of $\mathcal{D}(P)$ as follows.
For odd $i$, let
\begin{itemize}
\item $C_i^+ = \{ D_i^+ \}$
\item $C_i^- = \{$essential disks in $B^- - K$ that intersect $D_i^+$ and
are disjoint from $D_1^+, D_3^+, \ldots, D_{i-2}^+ \}$.
\end{itemize}
For even $i$, let
\begin{itemize}
\item $C_i^+ = \{$essential disks in $B^+ - K$ that intersect $D_i^-$ and
are disjoint from $D_2^-, D_4^-, \ldots, D_{i-2}^- \}$
\item $C_i^- = \{ D_i^- \}$.
\end{itemize}

Note that for all $i$, $D_i^{\pm}$ belongs to $C_i^{\pm}$.

\begin{lemma}
The collection $\{ C_i^{\pm} \}$ $(i=1, \ldots, n)$ is a partition of the set of essential disks in $B^{\pm} - K$.
\end{lemma}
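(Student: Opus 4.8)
The plan is to fix the $+$ sign (the case of $B^--K$ being entirely symmetric, with the roles of even and odd indices reversed) and to establish two things: that the sets $C_1^+,\dots,C_n^+$ are pairwise disjoint, and that together they account for all essential disks in $B^+-K$.

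First I would record the combinatorics of the curves $\partial D_i^{\pm}$. Labelling the $2n+2$ punctures of $P$ cyclically as $p_1,\dots,p_{2n+2}$, a direct check from the definitions of the $D_i^{\pm}$ shows that each $\partial D_i^{\pm}$ encloses a block of $2i$ consecutive punctures straddling $p_1$: there are curves $\gamma_i$ and $\delta_i$, each enclosing $2i$ punctures, with $\partial D_i^+=\gamma_i$, $\partial D_i^-=\delta_i$ for $i$ odd and $\partial D_i^+=\delta_i$, $\partial D_i^-=\gamma_i$ for $i$ even, and the $\gamma_i$ $(1\le i\le n)$ form a nested family $\gamma_1\subset\gamma_2\subset\cdots\subset\gamma_n$ of pairwise disjoint curves. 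Two facts follow. (i) $D_i^+$ and $D_j^-$ are disjoint whenever $i$ and $j$ have opposite parity, since then their boundaries are disjoint members of this nested family and the disks lie in opposite balls. (ii) For $j$ even the curve $\gamma_j$ bounds no compressing disk for $P$ in $B^+-K$, because the arc $a_j^+$ of $K\cap B^+$ has one endpoint inside $\gamma_j$ and one outside, and so meets every disk in $B^+$ bounded by $\gamma_j$. From (i), together with the facts that the $D_i^+$ for odd $i$ are pairwise non-isotopic (they enclose different numbers of punctures) and that membership in an even $C_j^+$ forces a disk to meet $D_j^-$, one sees that no $D_i^+$ with $i$ odd lies in an even $C_j^+$; and the even sets $C_j^+$ are pairwise disjoint by the usual smallest-index argument. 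This gives the disjointness.

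For the covering statement, let $E$ be an essential disk in $B^+-K$. If $E$ meets $D_j^-$ for some even $j\le n$, then $E$ lies in $C_{j_0}^+$ for the smallest such $j_0$ and we are done. Otherwise $\partial E$ is disjoint from $\gamma_j=\partial D_j^-$ for every even $j\le n$; as these curves are nested, $\partial E$ may be isotoped into a single complementary region $Q$ of their union — the disk inside $\gamma_2$, an annular region between two consecutive even $\gamma$'s, or the region outside the largest even $\gamma$ (a four-punctured disk or a pair of pants according to the parity of $n$). Since $\partial E$ is essential in $P$ it is either $\partial$-parallel in $Q$, hence isotopic to some $\gamma_j$ with $j$ even — impossible by (ii) — or essential in $Q$. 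In the latter case I would argue as follows: $E$ splits the ball $B^+$ into two balls, no arc of $K\cap B^+$ crosses $E$, and therefore for each arc with exactly one endpoint in $Q$ that endpoint lies on the same side of $\partial E$ as the region containing the arc's other endpoint, while an arc with both endpoints in $Q$ keeps them on one side. Tracing which arcs of $K$ enter $Q$ and where they go then forces a definite partition of the punctures of $Q$, and that partition is realized by a unique curve, namely the ``middle'' curve $\gamma_i=\partial D_i^+$ of $Q$ for the appropriate odd $i\le n$. Finally, since $K\cap B^+$ is a trivial tangle its complement in $B^+$ is a handlebody, and in a handlebody an essential disk is determined up to isotopy by its boundary; hence $E$ is isotopic to $D_i^+$ and $E\in C_i^+$.

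The step I expect to be the main obstacle is the last one — locating $\partial E$ inside each complementary region $Q$. This demands a careful region-by-region bookkeeping: exactly which punctures $\gamma_j$ encloses, which arcs of $K$ have an endpoint inside $Q$ and on which side of $Q$ their other endpoint falls, and the behaviour of the small cases $n=1,2$ and of the two parities of $n$; and then one must still confirm that the puncture partition of $Q$ that is forced on $\partial E$ determines a unique curve, equal to the middle curve $\gamma_i$. The handlebody fact promoting ``$\partial E\simeq\gamma_i$'' to ``$E\simeq D_i^+$'' should be cited explicitly, though it is standard.
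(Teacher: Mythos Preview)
Your argument is correct in outline and arrives at the same conclusion, but by a genuinely different route from the paper's. The paper proceeds entirely in three dimensions: for each odd $i$ it takes a candidate disk $D$ (disjoint from the relevant even $D_j^-$'s and with boundary in the appropriate subdisk of $S$), assumes $|D\cap D_i^+|$ is minimal, and uses an outermost--arc surgery together with a ``sphere meets an arc once'' parity contradiction to force $D\cap D_i^+=\emptyset$, and then the same parity trick to force $D\simeq D_i^+$. There is no appeal to any general fact about handlebodies; the isotopy is produced by hand.

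Your approach instead pushes everything down to the planar surface $P$: you observe that the curves $\gamma_i$ are nested, so a disk missing all the even $D_j^-$ has boundary trapped in one complementary region $Q$; you then use the constraint that every bridge $a_j^+$ has both endpoints on one side of $\partial E$ to pin down the puncture partition that $\partial E$ induces on $Q$; planarity of $Q$ makes that partition determine $\partial E$ uniquely; and finally you invoke irreducibility of the handlebody $B^+-N(K)$ to promote ``same boundary'' to ``isotopic disk''. This is cleaner conceptually and separates the two--dimensional combinatorics from the three--dimensional topology, at the cost of quoting the standard lemma that in an irreducible $3$--manifold two properly embedded disks with isotopic boundaries are isotopic. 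The paper's surgery argument is more self--contained and gives the isotopy explicitly. Your acknowledged bookkeeping (which punctures lie in each $Q$, which bridges straddle its boundary, and the degenerate outer region when $n$ is even, where $Q$ is a pair of pants and no essential $E$ occurs) is genuine but routine once the nested picture is set up; the verification in the generic annular region goes through exactly as you sketch.
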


\begin{proof}
First we show that $\{ C_i^+ \}$ $(i=1, \ldots, n)$ is a partition of the set of essential disks in $B^+ - K$.
We show that any essential disk in $B^+ - K$ belongs to one and only one $C_i^+$.

An essential disk in $B^+ - K$ that intersects $D_2^-$ belongs to $C_2^+$ by definition.
Let $E_2 = N(b_1^- \cup b_1^+ \cup b_2^-)$ be the disk in $S$ such that $\partial E_2 = \partial D_2^-$.

\begin{claim}\label{claim1}
If an essential disk $D$ in $B^+ - K$ is disjoint from $D_2^-$ and $\partial D$ is in $E_2$,
then $D$ is isotopic to $D_1^+ \in C_1^+$.
\end{claim}

\begin{figure}[ht!]
\begin{center}
\includegraphics[width=12cm]{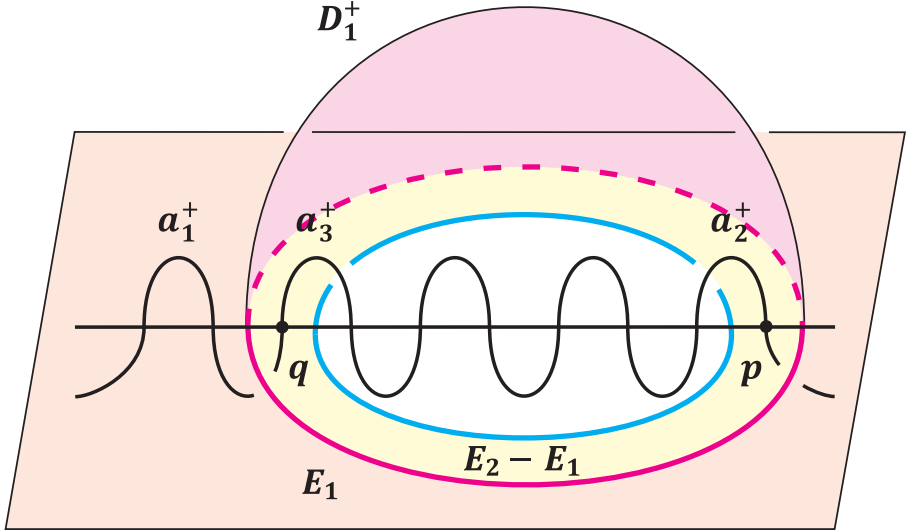}
\caption{$D_1^+$ in $C_1^+$.}\label{fig3}
\end{center}
\end{figure}

\begin{proof}[Proof of Claim \ref{claim1}]
We assume that $D$ intersects $D_1^+$ transversely and minimally,
so $D \cap D_1^+$ consists of arc components.
Let $E_1 = N(b_1^+)$ be the disk in $S$ such that $\partial E_1 = \partial D_1^+$.
See Figure $3$.
Suppose that $D \cap D_1^+ \ne \emptyset$.
Consider an outermost disk $\Delta$ of $D$ cut off by an outermost arc of $D \cap D_1^+$.
We can see that by minimality of $|D \cap D_1^+|$,
$\Delta$ cannot lie in the $3$-ball $B$ bounded by $D_1^+ \cup E_1$ containing $a_1^+$.
So $\Delta$ lies outside of $B$.
Let $\overline{D}$ be one of the disks obtained from $D_1^+$ by surgery along $\Delta$
such that $\partial\overline{D}$ bounds a disk $\overline{E}$ in $E_2 - E_1$.
Let $p$ be the point $a_2^+ \cap (E_2 - E_1)$ and $q$ be the point $a_3^+ \cap (E_2 - E_1)$.

Suppose $\overline{E}$ contains $p$.
Then the sphere $\overline{D} \cup \overline{E}$ intersects $a_2^+ \cup b_2^+$ in a single point
after a slight isotopy of $\mathrm{int}\, b_2^+$ into $B^-$, a contradiction.
So $\overline{E}$ does not contain $p$, and by similar reason $\overline{E}$ does not contain $q$.
Then $\overline{E}$ is an inessential disk in $E_2 - E_1 - K$, so we can reduce $|D \cap D_1^+|$, a contradiction.

Hence $D \cap D_1^+ = \emptyset$.
Let $E$ be the disk in $E_2$ such that $\partial E = \partial D$.
If $\partial E$ is in $E_1$, then $D$ is isotopic to $D_1^+$.
Suppose $\partial E$ is in $E_2 - E_1$.
Then $E$ contains neither $p$ nor $q$,
since otherwise $D \cup E$ intersects $a_2^+ \cup b_2^+$ or $a_3^+ \cup b_3^+$ in a single point as above.
So we get the conclusion that $D$ is isotopic to $D_1^+$.
\end{proof}

Therefore if an essential disk in $B^+ - K$ is disjoint from $D_2^-$ and its boundary is in $S - E_2$,
then it belongs to one of $C_3^+, \ldots, C_n^+$.

An essential disk in $B^+ - K$ that is disjoint from $D_2^-$ and intersects $D_4^-$
belongs to $C_4^+$ by definition.
Let $E_4 = N(b_1^- \cup b_1^+ \cup \cdots \cup b_3^- \cup b_3^+ \cup b_4^-)$ be the disk in $S$
such that $\partial E_4 = \partial D_4^-$.
Let $D$ be an essential disk in $B^+ - K$ that is disjoint from $D_2^-, D_4^-$ and $\partial D \subset S - E_2$.

\begin{claim}\label{claim2}
If $\partial D$ is in $E_4$ (hence in $E_4 - E_2$), then $D$ is isotopic to $D_3^+ \in C_3^+$.
\end{claim}

\begin{figure}[ht!]
\begin{center}
\includegraphics[width=12cm]{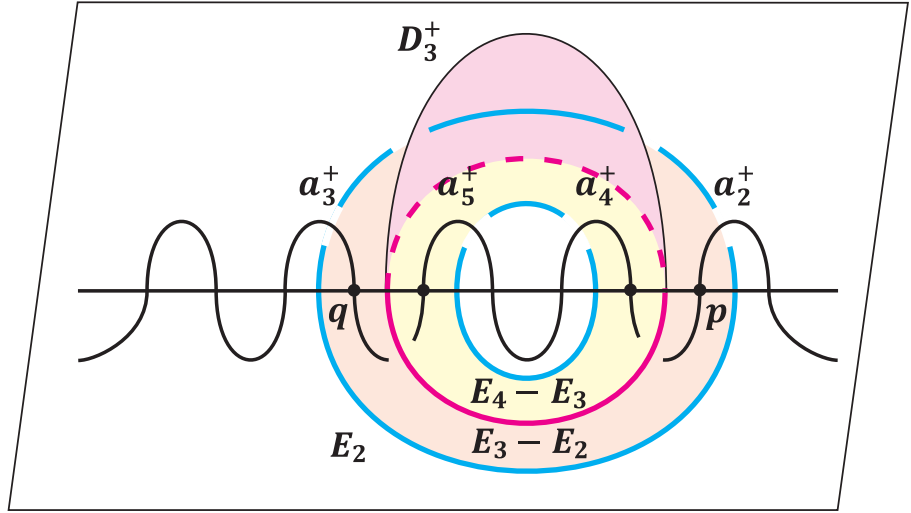}
\caption{$D_3^+$ in $C_3^+$.}\label{fig4}
\end{center}
\end{figure}

\begin{proof}[Proof of Claim \ref{claim2}]
We assume that $|D \cap D_3^+|$ is minimal up to isotopy, so $D \cap D_3^+$ consists of arc components.
Let $E_3 = N(b_1^+ \cup b_1^- \cup b_2^+ \cup b_2^- \cup b_3^+)$ be the disk in $S$ such that
$\partial E_3 = \partial D_3^+$.
See Figure $4$.
Suppose that $D \cap D_3^+ \ne \emptyset$.
Consider an outermost disk $\Delta$ of $D$ cut off by an outermost arc of $D \cap D_3^+$.
Without loss of generality, we assume that $\partial\Delta \cap S$ lies in $E_3 - E_2$.
Let $\overline{D}$ be one of the disks obtained from $D_3^+$ by surgery along $\Delta$
such that $\partial\overline{D}$ bounds a disk $\overline{E}$ in $E_3 - E_2$.
Let $p$ be the point $a_2^+ \cap (E_3 - E_2)$ and $q$ be the point $a_3^+ \cap (E_3 - E_2)$.

Suppose $\overline{E}$ contains $p$.
Then the sphere $\overline{D} \cup \overline{E}$ intersects $a_2^+ \cup b_2^+$ in a single point
after a slight isotopy, a contradiction.
So $\overline{E}$ does not contain $p$, and similarly $\overline{E}$ does not contain $q$.
Then $\overline{E}$ is an inessential disk in $E_3 - E_2 - K$, so we can reduce $|D \cap D_3^+|$, a contradiction.
Hence $D \cap D_3^+ = \emptyset$.
Then as in Claim \ref{claim1} we can see that $D$ is isotopic to $D_3^+$.
\end{proof}

Therefore if an essential disk in $B^+ - K$ is disjoint from $D_2^-$, $D_4^-$ and its boundary is in $S - E_4$,
then it belongs to one of $C_5^+, \ldots, C_n^+$.

In general, let $E_{2i} = N(b_1^- \cup b_1^+ \cup \cdots \cup b_{2i-1}^- \cup b_{2i-1}^+ \cup b_{2i}^-)$
be the disk in $S$ such that $\partial E_{2i} = \partial D_{2i}^-$.
Let $D$ be an essential disk in $B^+ - K$ that is disjoint from $D_2^-, D_4^-, \ldots, D_{2i-2}^-$ and
$\partial D \subset S - E_{2i-2}$.

\begin{itemize}
\item If $\partial D \subset E_{2i} - E_{2i-2}$, then $D$ is isotopic to $D_{2i-1}^+ \in C_{2i-1}^+$.
\item If $D$ intersects $D_{2i}^-$, then $D$ belongs to $C_{2i}^+$ by definition.
\item If $\partial D \subset S - E_{2i}$, then $D$ belongs to one of $C_{2i+1}^+, \ldots, C_n^+$.
\end{itemize}

An inductive argument in this way leads to that any essential disk in $B^+ - K$ belongs to one and only one $C_i^+$.
A similar argument shows that $\{ C_i^- \}$ $(i=1, \ldots, n)$
is a partition of the set of essential disks in $B^- - K$.
\end{proof}

The collection of disks $\{ D_1^+, D_1^-, \ldots, D_n^+, D_n^- \}$ spans an $(n-1)$-sphere $S^{n-1}$ in $\mathcal{D}(P)$.
There is no edge in $\mathcal{D}(P)$ connecting $C_i^+$ and $C_i^-$ by definition.
There exists an edge in $\mathcal{D}(P)$ connecting $C_i^{\pm}$ and $C_j^{\pm}$ for $i \ne j$,
e.g. en edge between $D_i^{\pm}$ and $D_j^{\pm}$,
and there exists an edge in $\mathcal{D}(P)$ connecting $C_i^+$ and $C_j^-$ for $i \ne j$,
e.g. an edge between $D_i^+$ and $D_j^-$.
Hence if we define a map $\overline{r}$ from the set of vertices of $\mathcal{D}(P)$ to the set of vertices of $S^{n-1}$ by
$$\overline{r}(v) = D_i^{\pm} \quad \textrm{if}\,\, v \in C_i^{\pm},$$
then $\overline{r}$ extends to a continuous map from the $1$-skeleton of $\mathcal{D}(P)$ to the $1$-skeleton of $S^{n-1}$.
Since higher dimensional simplices of $\mathcal{D}(P)$ are determined by $1$-simplices,
$\overline{r}$ can be extended to a retraction $r : \mathcal{D}(P) \to S^{n-1}$.
Hence $\pi_{n-1}(\mathcal{D}(P)) \ne 1$, and
the topological index of $P$ is at most $n$.


\end{document}